\documentclass[12pt]{amsart}
\usepackage{amsmath,amsthm,epsfig,amssymb,amscd,amsfonts} 
\pagestyle{plain}
\usepackage[all]{xy}
\usepackage{young}
\newtheorem{thm}[subsection]{Theorem}

\newtheorem{prop}[subsection]{Proposition}

\newtheorem{lem}[subsection]{Lemma}
\newtheorem{corol}[subsection]{Corollary}
\newtheorem{rem}[subsection]{Remark}
\theoremstyle{definition}
\newtheorem{Def}[subsection]{Definition}
\newtheorem{Not}[subsection]{Notation}

\newtheorem{Remark}[subsection]{Remark}

\newtheorem{proposition-definition}[subsection]{Proposition-Definition}

\begin{normalsize} \end{normalsize}

\newcommand{\codim}{\operatorname{codim}}

\newcommand{\CC}{{\mathbb C}}

\newcommand{\ZZ}{{\mathbb Z}}
\newcommand{\QQ}{{\mathbb Q}}

\newcommand{\NN}{{\mathbb N}}

\newcommand{\OOO}{{\mathcal O}}

\newcommand{\LLL}{{\mathcal L}}

\newcommand{\SSS}{{\mathcal S}}

\numberwithin{equation}{section}

\author{F. Laytimi}

\address {F. L.: Math\'ematiques - b\^{a}t. M2, Universit\'e Lille 1,
F-59655 Villeneuve d'Ascq Cedex, France}

\email {fatima.laytimi@math.univ-lille1.fr}

\author{W. Nahm}

\address{W. N.: Dublin Institute for Advanced Studies,
10 Burlington Road, Dublin 4, Ireland}

\email{wnahm@stp.dias.ie}

\subjclass{14F17}

\title{ Ampleness equivalence and dominance for vector bundles }

\begin{document}

\date{}

\begin{abstract} Hartshorne in "Ample vector bundles" proved that $E$ is ample if and only if $\OOO_{P(E)}(1)$  is ample.
Here we generalize this result to  flag manifolds associated to a vector bundle $E$ on a complex manifold $X$:
For a partition $a$ 
we show that the line bundle $\it Q_a^s$ on the corresponding flag manifold 
$\mathcal{F}l_s(E)$
is ample if and only if $ \SSS_aE $ is ample.  In particular
$\det Q$ on $\it{G}_r(E)$ is ample if and only if  $\wedge ^rE$ is ample.\\ 
We give also a proof of the Ampleness Dominance theorem that does not depend on the saturation property of
the Littlewood-Richardson semigroup.
\end{abstract}

\maketitle

\section{Introduction} \setcounter{page}{1}
Let  $E$ be  a complex vector bundle  of rank $d $  on a compact complex manifold $X$
and $s$ be a sequence of integers $s=(s_0,s_1,\ldots s_m)$ such that \\ $0=s_0<s_1<\ldots<s_m=d.$
For $x\in X$ and the corresponding fiber $V=E_x,$ consider the manifold $\mathcal {F}l_s(V)$ of incomplete flags:
$$(V=V_{s_0}\supset V_{s_1}\supset\ \ldots \supset V_{s_m}=\{0\})= F,$$
 where $V_{s_i}$ is a vector subspace of $V,\ \codim  V_{s_j}=s_j.$
When $x$ varies, the $\mathcal {F}l_s(E_x)$ together form a manifold $\mathcal {F}l_s(E).$
 
 For a partition $a=(a_1,a_2, \ldots  )$ of length $l \leq d$  such that
\begin{align}
\nonumber a_1&=a_2  =\ldots =a_{s_1} \\
\nonumber a_{s_1+1 } &= a_{s_1+2}=\ldots=a_{s_2}\\
\nonumber a_{s_2+1 } &= a_{s_2+2}=\ldots=a_{s_3}\\
\nonumber \ldots &
\end{align}
there is a corresponding line bundle $\it Q_a^s$ over 
$\mathcal {F}l_s(E).$ When restricted to $\mathcal {F}l_s(V)$ 
it has the form
 $$ \it Q_a^s= \it  Q^{a_{s_1}}_{s_1}\otimes \it Q^{a_{s_2}}_{s_2}\otimes \ldots \otimes \it Q^{a_{s_m}}_{s_m} $$
where
$$ \it {Q_{s_j}}{|_F}=\det(V_{s_{j-1}}/V_{s_{j}}),\ \  1\leq j\leq m.$$

By Bott's formula \cite{ Bott}, for any $m\geq 0$ 
\begin{equation}\label{eq1}
  \pi_*(( \it Q_a^s)^m) \simeq \SSS_{ma}E 
\end{equation}  
\begin{equation}\label{eq2}
\it{R}^q{\pi_*}(( \it Q_a^s)^m) \simeq 0,\ \ {\mbox{if}}  \
 \ q>0
\end{equation}  
where  $\pi:\mathcal {F}l_s(E)\longrightarrow X$ is the natural projection
and $\SSS_a$ is the Schur functor corresponding to $a$.

Our Main Theorem is

\begin{thm} \label{main1}
     The line bundle $\it Q_a^s$ on  $\mathcal{F}l_s(E)$ is ample if and only if  the vector bundle $\SSS_{a}E$ on $X$  is ample.
\end{thm}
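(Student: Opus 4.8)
The plan is to deduce the theorem from Hartshorne's characterization together with Bott's formula (\ref{eq1})--(\ref{eq2}), by exhibiting $\mathcal{F}l_s(E)$ as a tower of projective bundles and pushing the ampleness of the relevant line bundle down step by step. The essential structural fact is the Leray--Hirsch/projection package: since $\pi:\mathcal{F}l_s(E)\to X$ is a smooth projective morphism whose fibers are flag varieties, and since the higher direct images of powers of $\it Q_a^s$ vanish while $\pi_*(( \it Q_a^s)^m)\simeq \SSS_{ma}E$, cohomology on the total space and on $X$ are tightly linked. I would phrase ampleness via the asymptotic cohomological criterion (Serre): a line bundle $L$ is ample iff for every coherent sheaf $\FFF$ on the (projective) total space there is $m_0$ with $H^q(\FFF\otimes L^m)=0$ for all $q>0$ and $m\ge m_0$, and global sections generating; equivalently I would use the numerical/cohomological characterization that best matches the flag-bundle setting.

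First I would establish the easy direction, that ampleness of $\SSS_a E$ implies ampleness of $\it Q_a^s$. The idea is that $\mathcal{F}l_s(E)$ embeds, fiberwise, through the line bundle $\it Q_a^s$ into a projective bundle $\PP(\SSS_a E)$ over $X$ by the relative version of the Plücker-type embedding furnished by the surjection $\pi^*\SSS_a E = \pi^*\pi_*\it Q_a^s \twoheadrightarrow \it Q_a^s$ coming from (\ref{eq1}) with $m=1$. Under this relative embedding $\it Q_a^s$ is the pullback of $\OOO_{\PP(\SSS_a E)}(1)$; since $\SSS_a E$ is ample, $\OOO_{\PP(\SSS_a E)}(1)$ is ample by Hartshorne, and the restriction (pullback under a finite, hence affine, embedding) of an ample line bundle to a closed subvariety is ample, giving ampleness of $\it Q_a^s$.

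For the converse, that ampleness of $\it Q_a^s$ forces $\SSS_a E$ to be ample, I would again invoke Hartshorne and show $\OOO_{\PP(\SSS_a E)}(1)$ is ample, equivalently that $\SSS_a E$ is ample. Here I would use the cohomological criterion directly: for a coherent sheaf $\FFF$ on $X$, I want $H^q(X,\SSS_{ma}E\otimes\FFF)=0$ for $q>0$ and $m\gg 0$ (with suitable twists), and I would transport this to $\mathcal{F}l_s(E)$ using the degenerate Leray spectral sequence, which collapses precisely because of the vanishing (\ref{eq2}) of the higher direct images. Concretely, $H^q(X,\pi_*(( \it Q_a^s)^m)\otimes\FFF)\simeq H^q(\mathcal{F}l_s(E),( \it Q_a^s)^m\otimes\pi^*\FFF)$, and the ampleness of $\it Q_a^s$ gives the required vanishing on the total space for $m\gg 0$; combined with generation of sections this yields ampleness of $\SSS_a E$.

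The main obstacle I anticipate is the converse direction, specifically making the asymptotic vanishing uniform over all coherent $\FFF$ and extracting genuine ampleness (not merely cohomological positivity of the associated graded pieces) for $\SSS_a E$ from the line-bundle ampleness upstairs. One must be careful that Bott's formula (\ref{eq1}) produces $\SSS_{ma}E$ rather than $(\SSS_a E)^{\otimes m}$, so the sheaves $\pi_*((\it Q_a^s)^m)$ do not form the Veronese subalgebra of a single symmetric algebra; reconciling this with Hartshorne's criterion for $\SSS_a E$ requires identifying the section ring $\bigoplus_m H^0(\SSS_{ma}E\otimes\FFF)$ with the relevant graded module over $\bigoplus_m H^0(\OOO_{\PP(\SSS_a E)}(m)\otimes\FFF)$ and checking finite generation. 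Handling this algebra-of-sections matching cleanly, rather than the spectral-sequence bookkeeping, is where the real work lies.
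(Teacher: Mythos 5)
Your ``if'' direction is correct but runs along a different route than the paper's: where you use the relative Borel--Weil/Pl\"ucker embedding $\mathcal{F}l_s(E)\hookrightarrow \PP(\SSS_aE)$ induced by the evaluation surjection $\pi^*\SSS_aE=\pi^*\pi_*\it Q_a^s\to \it Q_a^s$, followed by Hartshorne's theorem and restriction of an ample line bundle to a closed subvariety, the paper instead invokes a general descent lemma (Lemma \ref{Gruson}): if $p_*G$ is ample, $G$ is ample along the fibers, and $p^*p_*G\to G$ is surjective, then $G$ is ample; this is verified via Gieseker's curve criterion. Your version trades the paper's curve-by-curve check for the classical fact that $\it Q_a^s$ is fiberwise very ample (which requires $a$ to decrease strictly at each step $s_i$, exactly the condition implicit in the paper's hypothesis that $\it Q_a^s$ be ample along fibers), and both arguments are sound.

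The converse direction, however, has a genuine gap, and it is the one you yourself flag at the end. Your Leray argument, using the vanishing (\ref{eq2}), yields only: for each coherent $\FFF$ on $X$, $H^q(X,\SSS_{ma}E\otimes\FFF)=0$ for $q>0$ and $m\gg 0$ --- vanishing along the single ray $\{ma\}_{m\geq 1}$ of partitions. But ampleness of the vector bundle $\SSS_aE$ (whether via Hartshorne's $\OOO_{\PP(\SSS_aE)}(1)$ or the cohomological criterion of Lemma \ref{semiample}) requires control of $S^m(\SSS_aE)$, equivalently of $(\SSS_aE)^{\otimes m}$, and by Littlewood--Richardson this decomposes into Schur bundles $\SSS_cE$ over all constituents $c\in a^{\otimes m}$; these $c$ are dominated by $ma$ but can lie far from the ray (e.g.\ have many equal parts). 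Ray-vanishing says nothing about these off-ray summands, and your proposed fix --- finite generation of the section ring $\bigoplus_m H^0(\SSS_{ma}E\otimes\FFF)$ as a module over $\bigoplus_m H^0(S^m(\SSS_aE)\otimes\FFF)$ --- does not address them either: the discrepancy between $S^m(\SSS_aE)$ and $\SSS_{ma}E$ is precisely the problem, not a bookkeeping matter. The paper closes this gap with two inputs absent from your proposal: first, Mourougane's theorem (via Theorem \ref{useful}) upgrades ampleness of $\it Q_a^s$ to genuine ampleness of the \emph{bundle} $\pi_*((\it Q_a^s)^r)\simeq \SSS_{ra}E$ for all $r\gg 0$, which is strictly stronger than the asymptotic vanishing your spectral-sequence argument produces; second, the Ampleness Dominance theorem (Theorem \ref{dom}), proved in Sections 3--4 by decomposing any constituent $c$ as $c=f+g$ with $f$ in a finite set $\sigma(a)$ and $g\in a^{\otimes \mu(d)s}$ (Lemmas \ref{cone}, \ref{finite}, \ref{vinc}), handles exactly the off-ray constituents and permits the descent from $ra\simeq a$ to $a$. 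Without a substitute for this dominance mechanism, your converse does not close.
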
 

A useful special case is
\begin{corol}
 The line bundle $\det Q$ on the grassmannian bundle 
$\it{G}_r(E)$ is ample if and only if  the vector bundle $\wedge ^rE$ is ample on $X. $
 \end{corol}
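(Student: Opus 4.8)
The plan is to deduce this statement directly from Theorem~\ref{main1} by specializing the flag data to the Grassmannian case. First I would choose the sequence $s=(s_0,s_1,s_2)=(0,r,d)$, so that $m=2$ and a flag in $\mathcal{F}l_s(V)$ is a chain $V=V_{s_0}\supset V_{s_1}\supset V_{s_2}=\{0\}$ with $\codim V_{s_1}=r$. Since the top term $V$ and bottom term $\{0\}$ are fixed, such a flag amounts to the single choice of a codimension-$r$ subspace $V_r:=V_{s_1}$, equivalently an $r$-dimensional quotient $V/V_r$. Thus $\mathcal{F}l_{(0,r,d)}(E)$ is exactly the Grassmannian bundle $G_r(E)$ of rank-$r$ quotients of $E$.

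Next I would match the partition and the line bundles. For the exterior power one has $\wedge^r E=\SSS_a E$ with $a=(1,1,\ldots,1)$ having exactly $r$ ones; this partition is compatible with the chosen $s$, since the defining equalities read $a_1=\cdots=a_{s_1}=1$ and $a_{s_1+1}=\cdots=a_{s_2}=0$. Feeding $a=(1^r)$ and $s=(0,r,d)$ into the formula for $Q_a^s$ gives
\[
Q_a^s=Q_{s_1}^{a_{s_1}}\otimes Q_{s_2}^{a_{s_2}}=Q_{s_1}^{1}\otimes Q_{s_2}^{0}=Q_{s_1},
\]
and restricting to a fiber $F$ yields $Q_{s_1}|_F=\det(V_{s_0}/V_{s_1})=\det(V/V_r)=\det Q$, where $Q$ denotes the universal rank-$r$ quotient bundle. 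Hence $Q_a^s=\det Q$ on $G_r(E)$.

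With these identifications in place, I would simply invoke Theorem~\ref{main1}, which asserts that $Q_a^s$ is ample on $\mathcal{F}l_s(E)$ if and only if $\SSS_a E$ is ample on $X$. Under the dictionary above this reads precisely as: $\det Q$ is ample on $G_r(E)$ if and only if $\wedge^r E$ is ample on $X$, which is the claim.

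Since the statement is a corollary, there is no genuinely hard analytic or geometric step, and I do not expect a serious obstacle. The only point requiring care is bookkeeping of conventions---codimension versus dimension of the subspace, and quotient versus subbundle---so that the partition $a=(1^r)$ is correctly matched with $s=(0,r,d)$ and the universal quotient $Q=V/V_r$ is oriented the same way as in Bott's formula~\eqref{eq1}. A quick consistency check is that \eqref{eq1} taken with $m=1$ gives $\pi_*(Q_a^s)\simeq\SSS_a E=\wedge^r E$, confirming that the line bundle and the Schur bundle are paired as required.
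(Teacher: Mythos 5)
Your proposal is correct and is exactly the argument the paper intends: the corollary is stated as an immediate specialization of Theorem~\ref{main1} (with $s=(0,r,d)$, $a=({\bf 1}_r)\vee({\bf 0}_{d-r})$, $\mathcal{F}l_s(E)=G_r(E)$ and $Q_a^s=\det Q$), and the paper offers no further proof. Your careful bookkeeping of the codimension convention and the identification $Q_{s_1}|_F=\det(V/V_{s_1})$ fills in precisely the routine verification the authors left implicit.
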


 Note that by Theorem \ref{main1}, the vanishing theorem of Demailly
 \cite{Dem} is valid under the minimal hypothesis  $\SSS_aE$  ample.

 Moreover we give a new and simple proof of the Ampleness Dominance result:

\begin{thm}\label{dom}
 Let $E$ be a vector bundle of rank $d$ and $a,b$ be partitions.   
If $b\preceq a,$ then  $\SSS_aE$ ample implies
 $\SSS_b E$ ample.
\end{thm}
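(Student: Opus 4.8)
The plan is to reduce the statement, via the Main Theorem, to a single representation-theoretic positivity fact, and then to establish that fact by cohomology on the flag manifold rather than by any semigroup argument. Throughout I use that $b\preceq a$ forces $|a|=|b|$, together with the two standard permanence properties of ample bundles: a direct summand (indeed any quotient) of an ample bundle is ample, and every symmetric power of an ample bundle is ample.

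The first step is to convert the problem into a multiplicity statement. By Theorem \ref{main1} applied to the partition $mb$, which is constant on the same blocks as $b$ and hence has the same adapted sequence $s$, the bundle $\SSS_{mb}E$ is ample if and only if $Q_{mb}^s=(Q_b^s)^{\otimes m}$ is ample; for a line bundle this holds if and only if $Q_b^s$ is ample, i.e. if and only if $\SSS_bE$ is ample. Consequently it suffices to produce a single integer $m\ge 1$ for which $\SSS_{mb}E$ occurs as a direct summand of $S^m(\SSS_aE)$: granting this, ampleness of $\SSS_aE$ gives ampleness of $S^m(\SSS_aE)$, hence of its summand $\SSS_{mb}E$, hence of $\SSS_bE$.

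The remaining assertion is purely about $GL_d$-representations: if $V_\lambda$ denotes the fibre representation underlying $\SSS_\lambda E$, then for $b\preceq a$ there is some $m\ge1$ with the multiplicity $N_{mb}$ of $V_{mb}$ in $S^m(V_a)$ positive, equivalently with a nonzero highest-weight vector of weight $mb$ in $S^m(V_a)$. By Rado's theorem, $b\preceq a$ means exactly that $b$ lies in the convex hull of the $S_d$-orbit of $a$; clearing denominators gives $mb=\sum_\sigma m_\sigma\,\sigma(a)$ with $m_\sigma\in\ZZ_{\ge0}$ and $\sum_\sigma m_\sigma=m$, so $mb$ is at least a weight of $S^m(V_a)$ (a sum of $m$ of its weights). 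I would then upgrade the statement ``$mb$ is a weight'' to the statement ``$V_{mb}$ is a summand'' for a suitably chosen $m$.

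The main obstacle is precisely this upgrade — the point at which earlier proofs invoked saturation of the Littlewood--Richardson semigroup. My plan to avoid it is to compute $N_{mb}$ geometrically: using the fibrewise Borel--Weil--Bott mechanism that already underlies \eqref{eq1}--\eqref{eq2}, each $\SSS_cE$ is $\pi_*$ of a line bundle with vanishing higher direct images, so $N_{mb}$ is realized as the dimension of an explicit space of highest-weight vectors (equivalently, of sections of a line bundle on the flag manifold of the fibre). I would then exhibit a nonzero such vector for one well-chosen $m$, built as a product of the weight vectors $v_{\sigma(a)}$ furnished by Rado's decomposition. The delicate point that genuinely requires care is that this weight-$mb$ vector has nonzero highest-weight projection, i.e. that it is not absorbed into the isotypic components $V_c$ with $c\succ mb$; I expect to secure this by a filtration or degeneration argument on the flag manifold, and it is exactly here that the Bott vanishing of \eqref{eq2} takes over the role formerly played by the semigroup saturation.
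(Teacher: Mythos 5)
There are two genuine gaps here, and each is fatal on its own. First, your opening reduction is circular within this paper's logical structure. To descend from ampleness of $\SSS_{mb}E$ to ampleness of $\SSS_bE$ you invoke the ``only if'' direction of Theorem \ref{main1} (that $Q_b^s$ ample implies $\SSS_bE$ ample). But that direction is itself proved by applying Theorem \ref{dom}: from Theorem \ref{useful} one only obtains that $\pi_*((Q_b^s)^r)\simeq \SSS_{rb}E$ is ample for all large $r$, and the paper then uses Ampleness Dominance with $rb\simeq b$ to come back down to $\SSS_bE$. So the descent step ``$\SSS_{mb}E$ ample $\Rightarrow$ $\SSS_bE$ ample'' is precisely an instance of the theorem you are trying to prove, and nothing in your proposal supplies an independent proof of it; Mourougane's theorem inherently produces only high powers, i.e. $\SSS_{rb}E$ for $r$ large, never the scaled-down bundle.

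Second, and more fundamentally, your key representation-theoretic claim --- that for some $m$ the module $V_{mb}$ occurs with positive multiplicity in $S^m(V_a)$ (or in $V_a^{\otimes m}$) whenever $b\preceq a$ --- is exactly the saturation-type positivity statement this paper is engineered to avoid, and your sketch does not prove it. Rado's theorem only shows $mb$ is a \emph{weight} of $S^m(V_a)$; positive weight multiplicity does not imply positive isotypic multiplicity, and the product of extreme weight vectors $v_{\sigma(a)}$ of total weight $mb$ can perfectly well have zero projection onto the $V_{mb}$-isotypic component --- deciding when it does not is the entire content of the saturation problem. The Bott vanishing \eqref{eq2} cannot take over this role: it asserts vanishing of higher direct images, which is what yields the identification $\pi_*((Q_a^s)^m)\simeq\SSS_{ma}E$ in \eqref{eq1}, but it carries no information whatsoever about the multiplicity of $V_{mb}$ inside $S^m(V_a)$; your ``filtration or degeneration argument'' is exactly where the proof is missing. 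Note how the paper sidesteps this: it never proves $mb\in a^{\otimes m}$. Instead it uses the cohomological ampleness criterion (Lemma \ref{semiample}) and the cone machinery (Lemmas \ref{cone}, \ref{finite}, \ref{vinc}) to write every $c\in b^{\otimes m}$ as $c=f+g$ with $g\in a^{\otimes \mu(d)s}$ genuinely realized in a tensor power and $f$ ranging over the \emph{finite} set $\sigma(a)$, and then absorbs the finitely many residues $f$ into the auxiliary coherent sheaf $\FFF$ via $N_a(\SSS_fE\otimes\FFF)$. That absorption is what makes saturation unnecessary; your plan, if completed, would instead amount to reproving a saturation theorem geometrically, which is a much harder task than the theorem at hand.
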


 \begin{corol}
$\SSS_aE\otimes \SSS_bE$ is ample if and only if $\SSS_{a+b}E$ is ample.\\
For any positive integer $m$, $\SSS_aE$ is ample if and only if  $\SSS_{m a}E$ is ample.\\
 If $\Lambda^k E$ is ample and $s>0$ then  $\Lambda^{s+k} E$ is ample.
 \end{corol}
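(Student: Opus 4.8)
The plan is to prove the three assertions in turn, using Theorem~\ref{main1}, Theorem~\ref{dom}, and the standard facts that a finite direct sum of vector bundles is ample if and only if each summand is ample, and that a quotient of an ample bundle is ample. For the first assertion I would decompose the tensor product by the Littlewood--Richardson rule,
\[
\SSS_a E\otimes\SSS_b E\;\simeq\;\bigoplus_{c}\,(\SSS_c E)^{\oplus N^c_{ab}},
\]
and invoke the two classical properties of the top constituent: the componentwise sum $a+b$ occurs with multiplicity one, and every $c$ with $N^c_{ab}>0$ satisfies $c\preceq a+b$ (the sum of the highest weights is the highest weight of the product). The implication ``$\SSS_a E\otimes\SSS_b E$ ample $\Longrightarrow\SSS_{a+b}E$ ample'' is then immediate, since $\SSS_{a+b}E$ is a direct summand, hence a quotient, of an ample bundle. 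Conversely, if $\SSS_{a+b}E$ is ample, Theorem~\ref{dom} applied to each constituent (all of which obey $c\preceq a+b$) shows every $\SSS_c E$ is ample, whence the finite direct sum is ample.

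For the second assertion I would argue through the flag bundle rather than the tensor product, which makes the nontrivial direction transparent. By Theorem~\ref{main1}, $\SSS_a E$ is ample if and only if the line bundle $Q_a^s$ on $\mathcal{F}l_s(E)$ is ample. Since $ma$ has the same block structure as $a$, the description $Q_a^s=Q_{s_1}^{a_{s_1}}\otimes\cdots\otimes Q_{s_m}^{a_{s_m}}$ gives $Q_{ma}^s=(Q_a^s)^{m}$; as a positive power $L^m$ of a line bundle is ample exactly when $L$ is, a second application of Theorem~\ref{main1} yields that $\SSS_{ma}E$ is ample if and only if $\SSS_a E$ is.

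For the third assertion I would first rescale both sides into a common rectangle via the second assertion. Writing $\Lambda^k E=\SSS_{(1^k)}E$ and $\Lambda^{s+k}E=\SSS_{(1^{s+k})}E$, the second assertion gives
\begin{align}
\Lambda^k E\ \text{ample}\ &\Longleftrightarrow\ \SSS_{((s+k)^k)}E\ \text{ample},\nonumber\\
\Lambda^{s+k}E\ \text{ample}\ &\Longleftrightarrow\ \SSS_{(k^{s+k})}E\ \text{ample},\nonumber
\end{align}
where $((s+k)^k)=(s+k)\cdot(1^k)$ and $(k^{s+k})=k\cdot(1^{s+k})$ are conjugate rectangles, both of weight $k(s+k)$. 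It then remains to verify the single combinatorial inequality $(k^{s+k})\preceq((s+k)^k)$, i.e. that the wide rectangle dominates the tall rectangle of equal area; this reduces to $(s+k)\min(j,k)\ge k\min(j,s+k)$ for all $j$, which is elementary. Theorem~\ref{dom} then gives $\SSS_{((s+k)^k)}E$ ample $\Longrightarrow\SSS_{(k^{s+k})}E$ ample, which is precisely $\Lambda^k E$ ample $\Longrightarrow\Lambda^{s+k}E$ ample.

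The only inputs beyond Theorems~\ref{main1} and~\ref{dom} are representation-theoretic bookkeeping, and I expect the transpose-rectangle comparison in the third assertion to be the step most worth stating carefully: the comparison of $\Lambda^k$ and $\Lambda^{s+k}$ is \emph{a priori} inaccessible to $\preceq$ because the two partitions have different weights, and the key idea is that passing to the common multiple $k(s+k)$ converts it into a dominance relation between two equal-area rectangles to which the dominance theorem applies. This is what lets Theorem~\ref{dom} bridge exterior powers of different degrees.
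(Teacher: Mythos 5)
Your proof is correct, but it takes a partly different route from the one the paper intends, and the difference is instructive. The corollary is stated immediately after Theorem~\ref{dom} because all three items are meant to fall out of it directly, via the paper's extension of $\preceq$ to partitions of unequal weight (end of the definition of the dominance order: $b\preceq a$ iff $n_1b\preceq n_2a$ for some $n_1,n_2\in\NN$ equalizing the weights). Under that extension $ma\simeq a$, so the second item is immediate from Theorem~\ref{dom} read in both directions --- indeed the paper already uses exactly this equivalence, $ra\simeq a$, in the ``only if'' part of Theorem~\ref{main1}; and ${\bf 1}_{s+k}\vee{\bf 0}_{d-s-k}\preceq{\bf 1}_{k}\vee{\bf 0}_{d-k}$ holds in the extended order, which is precisely your equal-area rectangle inequality $(s+k)\min(j,k)\geq k\min(j,s+k)$, so the third item is also a one-step application. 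Your closing remark that the comparison of $\Lambda^k$ and $\Lambda^{s+k}$ is \emph{a priori} inaccessible to $\preceq$ overlooks this extension; your rescaling through the second assertion reconstructs it by hand, correctly. Your first item (the constituent $a+b\in a\otimes b$, which follows from Proposition~\ref{ZZ}, Proposition~\ref{dom1} for $c\preceq a+b$, plus the standard facts that quotients and finite direct sums of ample bundles are ample) coincides with the intended argument; multiplicity one of $a+b$ is not needed, only its occurrence. Finally, your derivation of the second item through $Q^s_{ma}=(Q^s_a)^m$ on $\mathcal{F}l_s(E)$ and two applications of Theorem~\ref{main1} is logically sound (no circularity, since Theorem~\ref{main1} is established first), but it routes through a theorem whose ``only if'' direction was itself proved by invoking $ra\simeq a$ together with Theorem~\ref{dom}, i.e.\ through the very fact you are re-deriving; what your detour buys is that everything is reduced to equal-weight dominance and line-bundle ampleness on the flag bundle, at the cost of being heavier than the direct appeal to Theorem~\ref{dom}.
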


\section{Proof of Theorem 1.1.}

\medskip 
For the ``if''  direction of Theorem(\ref{main1}) 
we will use  the  following  lemma which  was first  pointed out to us  by L. Gruson  and which we have already used  in (\cite{IMRN}. lemma 2.6).
To our knowlege there is  no proof in the literature, so  we prove it here.
 
 \begin{lem}\label{Gruson}
 Let $G$ be a vector bundle on $Y$ and 
 $p: Y\longrightarrow X$ be a proper morphism satisfying: \\
 1) $ p_*G$ is an ample vector bundle,\\
 2) $G$ is ample along the fibers and \\
 3) the map  $p^*p_*G\longrightarrow G$ is surjective.\\
 Then $G$ is ample. 
\end{lem}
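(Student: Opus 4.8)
The plan is to deduce ampleness of $G$ from ampleness of the tautological line bundle $\OOO_{\PP(G)}(1)$ (Hartshorne's criterion), and to do this by realizing that line bundle as the pullback of an ample line bundle under a finite morphism. Write $F=p_*G$; by hypothesis~(1) $F$ is ample, so $H:=\OOO_{\PP(F)}(1)$ is ample on $\PP(F)$. By hypothesis~(3) the surjection $p^*F\to G$ induces a closed immersion $j:\PP(G)\into\PP(p^*F)$ with $j^*\OOO_{\PP(p^*F)}(1)=\OOO_{\PP(G)}(1)$. Since the formation of projective bundles and of $\OOO(1)$ commutes with base change, $\PP(p^*F)=\PP(F)\times_X Y$ and $\OOO_{\PP(p^*F)}(1)=q^*H$, where $q:\PP(F)\times_X Y\to\PP(F)$ is the projection. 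Hence the composite $\phi:=q\circ j:\PP(G)\to\PP(F)$ satisfies $\phi^*H=\OOO_{\PP(G)}(1)$.

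First I would note that $\phi$ is proper: $j$ is a closed immersion, and $q$ is proper as the base change of the proper morphism $p$, so the composite $\phi$ is proper. The decisive step is to show that $\phi$ has finite fibers, and this is exactly where hypothesis~(2) is used. Over a point $x\in X$ the morphism $\phi$ restricts to the map $\psi_x:\PP(G|_{Y_x})\to\PP(F_x)$ induced by the surjection of the trivial bundle $F_x\otimes\OOO_{Y_x}$ onto $G|_{Y_x}$, and again $\psi_x^*\OOO_{\PP(F_x)}(1)=\OOO_{\PP(G|_{Y_x})}(1)$. By~(2) the bundle $G|_{Y_x}$ is ample, so $\OOO_{\PP(G|_{Y_x})}(1)$ is ample. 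If $\psi_x$ had a positive-dimensional fiber it would contract some curve $C$, and then $\deg\bigl(\OOO_{\PP(G|_{Y_x})}(1)|_C\bigr)=\deg\bigl(\psi_x^*\OOO_{\PP(F_x)}(1)|_C\bigr)=0$, contradicting the ampleness of $\OOO_{\PP(G|_{Y_x})}(1)$, which must have positive degree on every curve. Thus each $\psi_x$, and therefore $\phi$, has finite fibers.

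To finish, a proper morphism with finite fibers is finite, so $\phi$ is finite; and the pullback of the ample line bundle $H$ under the finite morphism $\phi$ is again ample. Therefore $\OOO_{\PP(G)}(1)=\phi^*H$ is ample, which by Hartshorne's criterion means that $G$ is ample.

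The one genuinely delicate point is the finiteness of $\phi$: it is precisely hypothesis~(2) that prevents $\phi$ from contracting the fiber directions of $p$, while hypotheses~(1) and~(3) supply, respectively, the ample target $H$ and the morphism $\phi$ itself. I expect the two general facts invoked at the end, namely that a proper map with finite fibers is finite and that a finite morphism pulls an ample line bundle back to an ample line bundle, to hold verbatim in the compact complex-analytic setting, so that no projectivity assumption on $X$ is required.
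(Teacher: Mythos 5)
Your proof is correct, but it takes a genuinely different route from the paper's. The paper first deduces from hypotheses (1) and (3) that $\SSS^m G$ is globally generated for large $m$, and then invokes Gieseker's criterion, which reduces ampleness to checking quotient line bundles of $G|_C$ for every curve $C\subset Y$: curves inside fibers of $p$ are handled by (2), while for a curve $C$ mapping finitely to $C_1=p(C)$ the restriction $G|_C$ is exhibited, via (3), as a quotient of $\pi^*i_1^*(p_*G)$, whose symmetric powers are ample by (1) since $\pi:C\to C_1$ is finite. You instead globalize the whole argument: the surjection (3) is packaged into a single morphism $\phi:\PP(G)\to\PP(p_*G)$ with $\phi^*\OOO_{\PP(p_*G)}(1)=\OOO_{\PP(G)}(1)$, hypothesis (2) rules out contracted curves, and one application of ``finite pullback of ample is ample'' together with Hartshorne's criterion finishes. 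Note that both arguments ultimately rest on that same pullback fact --- the paper applies it curve by curve on the base, you apply it once to $\phi$ --- and both use (2) in the same way, since any curve contracted by $\phi$ necessarily lies in some $\PP(G|_{Y_x})$. What your route buys is the elimination of Gieseker's lemma and of the global-generation step; what it costs is a little extra foundational care that you should make explicit: a positive-dimensional fiber of $\psi_x$ contains a curve because $\PP(G|_{Y_x})$ is compact (properness of $p$) and carries the ample $\OOO(1)$, hence is projective by Kodaira; and the facts that proper with finite fibers implies finite, and that this may be tested on (closed) points, are standard in both the algebraic and the analytic categories (Grauert--Remmert in the latter), so your closing claim that no projectivity of $X$ is needed does hold. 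With those justifications inserted, your argument is a complete and arguably more classical proof of the lemma.
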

\begin{proof}
We will use the following result of Gieseker
(Lemma 2.1.page 101, \cite{Gi}):\\
Suppose $Y$ is proper over a field $k$  and  $E$ is a bundle over $Y$ generated by  its  global  sections. Then $E$  is  ample  if  and  only  if every 
quotient line bundle of $E_{|C}$ is  ample for   every curve $C$  in  $Y.$

Now we will prove Lemma(\ref{Gruson}):

Since $p_*G$ is ample, for large $m, \ \SSS^m(p_* G) $ is generated by global sections. By $3),$  
$\SSS^m(p^*p_* G)\longrightarrow  \SSS^m G$ is surjective.
Hence   $\SSS^m G$ is generated by sections.\\
Let $C$ be any curve in $Y.$ 

 If $C \subset p^{-1}(x)$ for some $x\in X, $ then $\SSS^mG|_{C}$ is ample by 2). This implies $G|_C$ ample. 
 
 If $C$ is not contained in any fiber, then $$\pi=p_{|C}: C\longrightarrow p(C)=C_1$$
 is a finite morphism. 
By $3)$ we have a surjective map
\begin{equation}\label{eq0}
i^*p^*p_*G \longrightarrow i^*G \longrightarrow 0 .
\end{equation}

The commutative diagram 
$$
\begin{array}{rcl}
C& \stackrel{i}{\hookrightarrow} & Y\\
{\pi}{\downarrow}& &\downarrow{p}\\
C_1&\stackrel{i_1}{\hookrightarrow}& X
\end{array}
$$
gives 
$$\SSS^m(i^*p^*(p_*G))=\pi^*i_1^*(\SSS^m(p_*G)). $$ 
Now $\pi^*i_1^*(\SSS^m(p_*G))$ is ample by assupmption 1).  Hence by (\ref{eq0})  $i^*G= G|_C$ is ample. This shows that $G$ is ample by Gieseker's result 
stated at the beginning of the proof.
\end{proof}

\medskip

{\bf I. Proof of the ``if''  direction of Theorem 1.1.} 

We apply Lemma(\ref{Gruson}) with $Y=\mathcal{F}l_s(E), \ \ G=\it Q_a^s $ 
and $p= \pi. $ By using equation(1.1), it is clear that 
all assumptions in Lemma(\ref{Gruson})  are satisfied. 
Hence $\it Q_a^s$ is ample. 
\medskip 

{\bf II. Proof of the 'only if' direction of Theorem 1.1.}
\medskip 

Some preparations:

\begin{thm}\label{useful}
 Let $\pi : Y  \longrightarrow X$ be a submersion between 
 two complex manifolds and $L$ be an ample line bundle on $Y.$ 
 Then there  exists $r_0$ such that 
 $\forall r \geq  r_0, \ \ \pi_*(L^r)$ is ample.
\end{thm}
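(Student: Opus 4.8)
The plan is to prove ampleness of $\pi_*(L^r)$ by curvature methods, using two classical facts: on a compact complex manifold a line bundle is ample if and only if it carries a smooth metric of positive curvature, and a Griffiths-positive vector bundle is ample. Throughout I work, as in the intended application $Y=\mathcal{F}l_s(E)$, under the assumption that $\pi$ is proper and $X$ is compact, so that $\pi_*(L^r)$ is a coherent sheaf on a compact manifold.

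First I would record the algebraic preliminaries that turn $\pi_*(L^r)$ into a genuine vector bundle carrying a smooth $L^2$-metric. Since $L$ is ample and $\pi$ is proper, $L$ is $\pi$-ample, so by relative Serre vanishing there is an $r_1$ with $\it{R}^q\pi_*(L^r)=0$ for all $q>0$ and all $r\geq r_1$. By cohomology and base change $\pi_*(L^r)$ is then locally free and its fibre over $x$ is $H^0(Y_x,L^r|_{Y_x})$, where $Y_x=\pi^{-1}(x)$. This fibrewise identification is exactly what lets the analytic input below compute the curvature.

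The heart of the argument is the twist $L^r=K_{Y/X}\otimes M_r$ with $M_r:=L^r\otimes K_{Y/X}^{-1}$, where $K_{Y/X}$ is the relative canonical bundle, well defined since $\pi$ is a submersion. Fix a smooth metric $h_L$ on $L$ with $\Theta(h_L)>0$, which exists because $L$ is ample on the compact manifold $Y$. The curvature of the induced metric on $M_r$ is $r\,\Theta(h_L)-\Theta(K_{Y/X})$, and by compactness of $Y$ there is an $r_0\geq r_1$ such that this $(1,1)$-form is strictly positive for every $r\geq r_0$. I would then invoke the positivity of direct images (Berndtsson): for a proper Kähler submersion and a line bundle $M_r$ of strictly positive curvature, the direct image $\pi_*(K_{Y/X}\otimes M_r)$ with its $L^2$-metric has strictly Nakano-positive curvature. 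Since $\pi_*(K_{Y/X}\otimes M_r)=\pi_*(L^r)$, this produces a Nakano-positive, hence Griffiths-positive, metric on $\pi_*(L^r)$.

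Finally, a Griffiths-positive bundle on a compact manifold is ample: the tautological line bundle $\OOO_{\PP(\pi_*L^r)}(1)$ inherits a metric of positive curvature, so it is positive, hence ample, and therefore $\pi_*(L^r)$ is ample by Hartshorne's theorem. This gives the conclusion for all $r\geq r_0$. The hard part is precisely the curvature positivity of the direct image: one must pass from pointwise positivity of $\Theta(M_r)$ on the total space to strict positivity of the $L^2$-curvature of $\pi_*(L^r)$ in all directions, including the horizontal ones, which is the content of the direct-image positivity theorem; the twist by $K_{Y/X}^{-1}$ is the device that allows that theorem to be applied to $L^r$ itself.
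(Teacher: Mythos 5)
Your proof is correct, and its skeleton coincides with the paper's: both arguments hinge on the same twist $L^r = K_{Y/X}\otimes\bigl(L^r\otimes K_{Y/X}^{-1}\bigr)$, with $r_0$ chosen so that the twisted factor is ample (positively curved) for all $r\geq r_0$. Where you diverge is in the direct-image input. The paper invokes Mourougane's Theorem 1 as a black box: for a submersion $\pi$ and $L$ ample, $\pi_*(L\otimes K_{Y/X})$ is ample or zero; this yields ampleness of $\pi_*(L^r)$ in one line, with no metrics, no local-freeness discussion, and no explicit compactness hypotheses. You instead rederive the positivity at the metric level via Berndtsson's theorem on $L^2$-metrics of direct images, then convert Nakano (hence Griffiths) positivity into ampleness through $\OOO_{\PP(\pi_*L^r)}(1)$ and Kodaira. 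This buys a genuinely stronger conclusion --- an explicit Nakano-positive metric on $\pi_*(L^r)$, not just ampleness --- but at the cost of extra preliminaries you correctly identified as necessary: relative Serre vanishing plus Grauert base change to make $\pi_*(L^r)$ locally free with fibres $H^0(Y_x, L^r|_{Y_x})$, and the Kähler hypothesis on the total space required by Berndtsson, which is automatic here since $L$ ample forces $Y$ projective. Two small points of care: the strict Nakano positivity statement in the literature requires attention to the exact hypotheses, but strict Griffiths positivity (which is all your last step uses) is safely available; and your restriction to $\pi$ proper with $X$ compact narrows the theorem as literally stated, though it matches the intended application $Y=\mathcal{F}l_s(E)$ and is in any case implicitly needed for the statement to make sense --- the paper is simply silent on this.
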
 
\begin{proof}
 We use the following result due to Mourougane (Theorem 1, \cite{M}):
 
If $\pi:Y \longrightarrow X$ is a submersion between two 
complex manifolds and $L$ is an ample  line bundle on $Y,$ 
then $\pi_*(L\otimes K_{Y/X})$ is ample or zero.
 
On the other hand, ampleness of $L$ implies that there exists $r_0,$ 
so that $\forall \ r \geq  r_0,  \  L^r\otimes K_{Y/X}^{-1}$ is ample. 

 Thus  $\pi_*( K_{Y/X}\otimes(L^r\otimes  K_{Y/X}^{-1}))=\pi_*(L^r)$ 
 is ample by Mourougane's result. 
\end{proof}
Now we come to the proof of the 'only if' direction of Theorem(\ref{main1}).

If  $\it Q_a^s$ is ample by Theorem(\ref{useful}) there  exists $r_0$ such that\\
 $\forall r \geq  r_0, \ \ \pi_*((Q_a^s)^r)$ is ample. By equation (\ref{eq1})
 $$\pi_*((Q_a^s)^r)\simeq \SSS_{ra}E. $$
 
Ampleness of  $\SSS_{a}E$ is deduced by the Ampleness Dominance\\ Theorem(\ref{dom}), since 
 $$ra\simeq a$$ in the dominance partial order.

\begin{rem}
 The proof of the Ampleness Dominance result (Theorem 3.7 p.175 in \cite{LP}) 
uses the saturation property of the Littlewood-Richardson semigroup for $r$ summands, $r$ arbitrary (p. 172 in \cite{LP}).
This property had been proven by Knutson and Tao \cite{KT} for $r=2$. We assumed that the extension to $r$ arbitrary is immediate,
but this appears not to be the case. A new and simpler proof that does not use saturation at all will be given in the next section.
\end{rem}

\section{ Dominance partial order on partitions and ampleness}

\begin{Def}
A partition $a=(a_1,a_2,\ldots a_d)$ of length $d$ is a non-increasing sequence of non-negative  integers $a_i. $   
The  weight $ \sum_{i=1}^{d} a_i$ is  denoted by $|a|.$

The dominance partial ordering of partitions is defined in 
\cite{Fulton} by:\\
Let $a=(a_1,a_2,\ldots a_d) $ and $ b=(b_1,b_2,\ldots b_d)$ be two partitions  of the same weight.\\ 
Then \ \ $a \preceq b$ \ \ if
\begin{align}
\nonumber a_1&\leq b_1 \\
\nonumber a_1+a_2&\leq b_1+b_2\\
\nonumber \ldots&\leq\ldots\\
\nonumber a_1+\ldots +a_{d-1}&\leq b_1+\ldots +b_{d-1}.\\
\nonumber a_1+\ldots +a_{d-1}+a_d&= b_1+\ldots +b_{d-1}+b_d.
\end{align}

  We extend this definition to non-increasing sequences of non-negative rational numbers:
$$a \preceq b\ \ {\mbox{if}}\ \ n_1a \preceq n_2b \ \ {\mbox{for some }}\ \   n_1,n_2 \in \NN, $$
 where \ $n_1a$ \ and \ $ n_2b$ are partitions of the same weight.
\end{Def}

When $a $ and $b$ are non-zero partitions of not necessarily equal weight an equivalent formulation is:
 $$a \preceq b \ \ \ \ \  {\mbox{if} } \ \  \ \ \ |b|\ a \ \preceq |a| \ \ b.$$
 
 If $ a \preceq b \ \ {\mbox{and}}\ \ b \preceq a,$ then we say $a$ is 
 equivalent to $b$ and write $a \simeq b.$
  
 \begin{Not}\label{not3}
For finite sequences $b=(b_1,\ldots,b_s)$ and  $c=(c_1,\ldots c_t)$ we set  
$$b\vee c  = (b_1,\ldots,b_s,c_1,\ldots c_t).$$
For a given positive integer $d$ let 
$$\LLL(d)= \{(l_1,\ldots,l_r) \ | \ r,\ l_i\in \NN ,\  l_1+l_2+\ldots+l_r=d  \}. $$

If $L=(l_1,\ldots,l_r)\in \LLL(d)$ and $a \in\QQ_{\geq 0}^d,$ then for $1\leq i\leq r,$
$a(L,i)$ is a sequence
of length $l_i,$ such that $a=a(L,1)\vee a(L,2)\ldots\vee a(L,r).$
\end{Not} 
  
\begin{Not}\label{not2}
Let $a,b$ be partitions of length $d$. If $V$ is a vector space of dimension $d$
we denote by
 $  a \otimes  b $\ \ \  the set of all partitions appearing in the decomposition of the tensor product $\ \ \SSS_ aV \otimes \SSS_ b V $ 
 as a direct sum of irreducible representations of $GL(V)$, and analogously for tensor products with more than two factors.\\
 For ease of understanding we write the partition 
  $\bf{1_d}$ instead of $\det, $ where $${\bf{1}}_{i}=(\underbrace{1,1,\ldots, 1}_{i\ \ times} ) ,$$ and ${\bf{1_i}}\vee{\bf{0_{d-i}}}$  instead of 
 $\wedge^i, $  where \ \  $${\bf{0_i}}= (\underbrace{0,à,\ldots, 0}_{i\ \ times} ). $$
\end{Not}

The only properties of the Littlewood-Richardson rules for the tensor product of irreducible representions we need to use in the sequel are the following.

\begin{prop}\label{dom1}
 Let  $a,b,c$ be partitions of length $d$.    \\ 
 If $ c\in a\otimes b $,
 then $c\preceq (a+b).$
\end{prop}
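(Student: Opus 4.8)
The plan is to read the statement representation-theoretically. Fix a vector space $V$ of dimension $d$ and recall that $c\in a\otimes b$ means precisely that the irreducible $GL(V)$-module $\SSS_c V$ occurs in $\SSS_a V\otimes \SSS_b V$. Since this tensor product is homogeneous of degree $|a|+|b|$, every such $c$ satisfies $|c|=|a|+|b|=|a+b|$, so $c$ and $a+b$ have equal weight and the comparison $c\preceq a+b$ reduces to the partial-sum inequalities $\sum_{i\le k}c_i\le \sum_{i\le k}(a_i+b_i)$ for $1\le k\le d-1$. First I would observe that, on integral weights of $GL(V)$, these inequalities together with equality of total weight are exactly the statement that $(a+b)-c$ is a non-negative integral combination of the simple roots $e_i-e_{i+1}$; that is, the dominance order of the paper coincides with the usual root order on weights.

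Next I would invoke the standard fact that every weight $\mu$ occurring in the irreducible module $\SSS_\lambda V$ of highest weight $\lambda$ satisfies $\mu\preceq\lambda$ in the root order, which holds because $\SSS_\lambda V$ is generated from its highest weight vector by the lowering operators, each of which subtracts a positive root. Applying this to $\SSS_a V$ and $\SSS_b V$, every weight of $\SSS_a V$ is $\preceq a$ and every weight of $\SSS_b V$ is $\preceq b$. Since the torus acts diagonally, the weights of a tensor product are the sums of weights of the two factors, so every weight of $\SSS_a V\otimes \SSS_b V$ is $\preceq a+b$. If $c\in a\otimes b$, then $\SSS_c V$ is a submodule, so its highest weight $c$ is in particular a weight of $\SSS_a V\otimes \SSS_b V$; hence $c\preceq a+b$, as required.

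The only delicate point is the translation between the partial-sum definition of $\preceq$ and the root order used above, but this is an elementary computation: writing $(a+b)-c=\sum_i \gamma_i(e_i-e_{i+1})$ one finds $\gamma_k=\sum_{i\le k}\bigl((a+b)_i-c_i\bigr)$, so non-negativity of all $\gamma_k$ is exactly the desired inequalities. I expect no genuine obstacle beyond keeping this identification straight.

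Alternatively, one can argue purely combinatorially from the Littlewood-Richardson rule: $c\in a\otimes b$ produces a Littlewood-Richardson skew tableau of shape $c/a$ and content $b$, and the lattice-word (ballot) condition forces every entry in row $i$ to be at most $i$. Consequently the $\sum_{i\le k}(c_i-a_i)$ cells of $c/a$ lying in the first $k$ rows all carry values $\le k$, of which there are only $b_1+\cdots+b_k$ in the whole tableau; this yields $\sum_{i\le k}(c_i-a_i)\le \sum_{i\le k}b_i$, i.e.\ the same inequalities. In this route the main thing to justify carefully is the bound \emph{entries in row $i$ are $\le i$}, which follows from the ballot condition by induction on the entry value.
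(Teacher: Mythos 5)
Your proposal is correct, and in fact it supplies more than the paper does: the paper offers no written proof of Proposition \ref{dom1} at all, merely stating that it (together with Propositions \ref{ZZ} and \ref{V}) ``follows immediately from the rules of Littlewood--Richardson,'' with a pointer to Zelevinsky. Your second, combinatorial argument is exactly the fleshed-out version of the route the paper gestures at: the Littlewood--Richardson coefficient being nonzero gives a semistandard skew tableau of shape $c/a$ and content $b$ satisfying the ballot condition, the ballot condition (by the induction on entry values you flag, using that entries weakly increase along rows so an earlier $j-1$ in the reading word must sit in a strictly higher row) forces entries in row $i$ to be at most $i$, and counting cells in the first $k$ rows against the available entries $\le k$ yields $\sum_{i\le k}(c_i-a_i)\le \sum_{i\le k}b_i$. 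Your first argument is a genuinely different and arguably more structural route: it never touches Littlewood--Richardson combinatorics, using only that every weight of a highest-weight module lies below the highest weight in the root order, that weights of a tensor product are sums of weights, and the elementary identification $\gamma_k=\sum_{i\le k}\bigl((a+b)_i-c_i\bigr)$ between the root order and the partial-sum dominance order (your computation of which is correct, with $\gamma_0=\gamma_d=0$ forced by the equality of total weights, itself guaranteed since the tensor product is a homogeneous polynomial functor of degree $|a|+|b|$). What the weight-theoretic route buys is robustness and generality --- it applies verbatim to any constituent of any tensor product of highest-weight modules and to other reductive groups; what the combinatorial route buys is that it stays within the toolkit the paper actually invokes and proves slightly more along the way (it bounds the row placement of each entry, not just the highest weight). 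Either argument is a complete and valid proof of the proposition.
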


\begin{prop}\label{ZZ}
Let  $a,b,c,d, e, f$  be partitions of length $d$. If \\
 $c  \in  a\otimes b $ and  $f  \in d \otimes e,  $
then \ \ $  (c+f)  \in (a+d) \otimes ( b+e ) .$
 \end{prop}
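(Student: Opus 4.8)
The plan is to convert ``membership in a tensor-product decomposition'' into ``nonvanishing of a graded piece of an integral domain'', so that Proposition~\ref{ZZ} reduces to the triviality that the product of two nonzero homogeneous elements of a domain is a nonzero element of the sum of their degrees. Fix $V$ of dimension $d$, put $G=\mt{GL}(V)$, and for a dominant weight $\lambda$ let $V_\lambda$ be the irreducible $G$-module of highest weight $\lambda$; for a partition $\lambda$ of length $\le d$ one has $V_\lambda=\SSS_\lambda V$. Writing $N^\gamma_{\alpha\beta}=\dim\mt{Hom}_G(V_\gamma,V_\alpha\otimes V_\beta)$ for the Littlewood--Richardson coefficients, the hypotheses read $N^c_{ab}>0$ and $N^f_{de}>0$, and the conclusion is $N^{c+f}_{(a+d)(b+e)}>0$.

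First I would record the standard reformulation through invariants,
$$N^\gamma_{\alpha\beta}=\dim\bigl(V_\alpha\otimes V_\beta\otimes V_{\gamma^*}\bigr)^{G},$$
where $\lambda\mapsto\lambda^*$ is the dualization $V_\lambda^*=V_{\lambda^*}$, which for $\mt{GL}(V)$ sends $(\lambda_1,\dots,\lambda_d)$ to $(-\lambda_d,\dots,-\lambda_1)$ and is therefore additive, $(\gamma+\gamma')^*=\gamma^*+\gamma'^*$, and involutive, $(\gamma^*)^*=\gamma$. This additivity is exactly what will align the degrees so that $c^*+f^*=(c+f)^*$ in the final step.

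Next I would introduce a single domain carrying all these invariants at once. Let $U\subset G$ be a maximal unipotent subgroup. The ring $R=\CC[G]^{U}$ is a subring of the coordinate ring of the irreducible variety $G$, hence an integral domain; under left translation it decomposes as $R=\bigoplus_\lambda V_\lambda$, graded by the dominant weights through the commuting right torus action. Then $R^{\otimes 3}=\CC[G\times G\times G]^{U\times U\times U}$ is again a domain, multigraded by triples of dominant weights, and the diagonal left $G$-action commutes with the right tori and so preserves this grading. Consequently the invariant ring $A=(R^{\otimes 3})^{G}$ is a domain (a subring of a domain), graded by triples $(\alpha,\beta,\gamma)$ with
$$A_{(\alpha,\beta,\gamma)}=\bigl(V_\alpha\otimes V_\beta\otimes V_\gamma\bigr)^{G},\qquad \dim A_{(\alpha,\beta,\gamma)}=N^{\gamma^{*}}_{\alpha\beta}.$$

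Finally I would assemble the argument. By the displayed identity, $c\in a\otimes b$ gives $A_{(a,b,c^{*})}\neq 0$ and $f\in d\otimes e$ gives $A_{(d,e,f^{*})}\neq 0$; picking nonzero homogeneous elements in each piece and multiplying them inside the domain $A$ yields a nonzero element of degree $(a+d,\,b+e,\,c^{*}+f^{*})=(a+d,\,b+e,\,(c+f)^{*})$, using additivity of $*$. Thus $A_{(a+d,\,b+e,\,(c+f)^{*})}\neq 0$, i.e. $N^{c+f}_{(a+d)(b+e)}>0$ by involutivity of $*$, which is precisely $(c+f)\in(a+d)\otimes(b+e)$. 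The main obstacle here is not any calculation but the bookkeeping that sets the argument up: verifying that $R=\CC[G]^{U}$ is the claimed graded domain with pieces $V_\lambda$, that passing to $R^{\otimes 3}$ and then to diagonal $G$-invariants preserves both the grading and the domain property, and that the duality $*$ is additive and involutive so the three degrees combine correctly. Once these points are in place, the nonvanishing that the saturation-based approach had to force becomes automatic, being merely the absence of zero divisors in a domain.
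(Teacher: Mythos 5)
Your argument is correct, but it takes a genuinely different route from the paper's. The paper disposes of Proposition \ref{ZZ} in a single line, asserting that it ``follows immediately from the rules of Littlewood--Richardson'' with a pointer to Zelevinsky \cite{Z} --- in effect the combinatorial semigroup property of the Littlewood--Richardson coefficients, obtained by superposing a tableau witnessing $c\in a\otimes b$ with one witnessing $f\in d\otimes e$. You instead give a self-contained invariant-theoretic proof: encode all the coefficients at once as the multigraded pieces of $A=\bigl((\CC[G]^U)^{\otimes 3}\bigr)^{G}$ and observe that a product of nonzero homogeneous elements in a domain is nonzero of the summed degree. The steps you flag as the real work all check out: $\CC[G]^U$ is a subring of the coordinate ring of the irreducible variety $G$ (and $A$ a subring of $\CC[G^3]$), so the domain property is immediate; the algebraic Peter--Weyl decomposition gives $\CC[G]^U=\bigoplus_\lambda V_\lambda$ graded by the right torus, with Cartan multiplication $V_\lambda\cdot V_\mu\subseteq V_{\lambda+\mu}$; and dualization $\lambda\mapsto\lambda^*=(-\lambda_d,\ldots,-\lambda_1)$ is indeed additive and involutive, with the non-polynomial dominant weight $\gamma^*$ causing no trouble since $\CC[G]=\CC[\mt{Mat}_d][1/\det]$ carries all rational irreducibles. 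What your approach buys: it avoids tableau combinatorics entirely, it uses no saturation (consonant with the paper's own closing remark of Section 2, which is at pains to avoid that input), and it generalizes verbatim to any connected reductive group, whereas the Littlewood--Richardson rule is specific to $\mt{GL}_d$. What it costs: it imports heavier machinery (Peter--Weyl, invariants of the horospherical cone) than the elementary combinatorial fact the paper intends, and two cosmetic points deserve a sentence in a final write-up --- the clash between the partition named $d$ and $d=\dim V$ (inherited from the paper's statement), and fixing conventions once (which side $U$ acts on, and whether the graded piece isomorphic to $V_\lambda$ carries right torus weight $\lambda$ or $-w_0\lambda$), neither of which affects correctness.
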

 
 \begin{prop}\label{V}
  Let  $a,b,c$  be partitions of length $d$  and \\ 
  $L=(l_1,l_2, \ldots, l_r)\in \LLL(d). $   If \\
 $  c(L,i) \in a(L,i) \otimes b(L,i)  $ for $i=1,\ldots, r, $ then \\
 $c \in a \otimes  b $.
  \end{prop}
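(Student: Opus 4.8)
The plan is to prove this directly from the Littlewood--Richardson rule as stated in \cite{Fulton}: $c\in a\otimes b$ holds precisely when the coefficient $c^{c}_{ab}$ is positive, i.e. when there exists a semistandard skew tableau of shape $c/a$ and content $b$ whose reverse reading word (each row read right to left, rows from top to bottom) is a lattice word. I write $s_0=0$ and $s_i=l_1+\cdots+l_i$, so that the $i$-th block $a(L,i)=(a_{s_{i-1}+1},\ldots,a_{s_i})$ occupies rows $s_{i-1}+1,\ldots,s_i$, and likewise for $b$ and $c$. The hypothesis $c(L,i)\in a(L,i)\otimes b(L,i)$ supplies, for each $i$, a Littlewood--Richardson tableau $T_i$ of shape $c(L,i)/a(L,i)$ with content $b(L,i)$; in particular $a(L,i)\subseteq c(L,i)$ and $|c(L,i)|=|a(L,i)|+|b(L,i)|$. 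Summing over $i$ gives $a\subseteq c$ and $|c|=|a|+|b|$, and shows that the skew diagram $c/a$ is the row-disjoint union of the block diagrams $c(L,i)/a(L,i)$.

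First I would assemble a single filling $T$ of $c/a$: in the rows of block $i$ I place $T_i$ after raising every entry by $s_{i-1}$. Block $i$ then uses only the values $s_{i-1}+1,\ldots,s_i$, so its relabelled content is $(b_{s_{i-1}+1},\ldots,b_{s_i})$ carried by those values, and hence $T$ has total content $b$, as required. The next step is to check that $T$ is semistandard. Inside each block this is inherited from $T_i$; across a block boundary it holds because every entry of block $i$ exceeds $s_{i-1}$, which is at least as large as every entry of the earlier blocks, so the strict increase down columns is automatic wherever a box of row $s_{i-1}$ sits above a box of row $s_{i-1}+1$, and no single row meets two blocks.

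The heart of the argument is verifying that the reverse reading word of $T$ is a lattice word, and this is where I expect the only real difficulty. That word is the concatenation of the (shifted) reverse reading words of $T_1,\ldots,T_r$, in this order. For a consecutive pair $j,j+1$ lying in the interior of a single block, the lattice inequality is exactly the one guaranteed by $T_i$, since these two values occur in no other block and so their prefix counts in the full word agree with those within block $i$. The remaining case is a boundary pair $j=s_i$, $j+1=s_i+1$: here $j$ occurs only in block $i$ and $j+1$ only in the later block $i+1$, so at any prefix the count of $j$ has already reached its maximum $b_{s_i}$ before any $j+1$ is read, while the count of $j+1$ never exceeds $b_{s_i+1}$. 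The required inequality $b_{s_i}\ge b_{s_i+1}$ is precisely the statement that $b$ is non-increasing; this is the one place where the hypothesis that $b$ is a genuine partition (rather than an arbitrary sequence) is used.

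Putting these together, $T$ is a genuine Littlewood--Richardson tableau of shape $c/a$ and content $b$, so $c^{c}_{ab}\ge 1$ and hence $c\in a\otimes b$, which is the assertion of Proposition \ref{V}.
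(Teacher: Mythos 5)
Your argument is correct and takes the same route the paper intends: the paper gives no written proof of this proposition, asserting only that it ``follows immediately from the rules of Littlewood--Richardson,'' and your construction---stacking the shifted tableaux $T_i$ row-block by row-block, checking semistandardness across block boundaries via the entry ranges $\{s_{i-1}+1,\ldots,s_i\}$, and reducing the boundary lattice condition to $b_{s_i}\ge b_{s_i+1}$---is exactly the omitted verification. You also correctly isolate the one genuinely nontrivial point, namely that the lattice condition at a block boundary needs the global monotonicity of $b$, i.e.\ the hypothesis that $b$ is a partition of length $d$ and not merely blockwise non-increasing.
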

 
These propositions follow immediately from the rules of 
Littlewood-Richardson.For some background see Zelevinsky \cite{Z}.

\begin{Remark}\label{Rem} Since $d\in a\otimes b\otimes c $ is equivalent to the existence
of a partition $e$ with $e\in a\otimes b $ and $d\in e\otimes c $ the three properties
generalize immediately to tensor products with more than two factors.
\end{Remark}

The partitions $c $ such that $c\preceq  a , $ 
are the integral points of a rational cone $C(a)$. More precisely:

\begin{Def}
  Let $a$ be a partition. Let
 $$C(a)=\{\ b=(b,\ldots, b_d)\in\QQ_{\geq 0}^d \ \vert\ b_1\geq b_2\geq \ldots\geq b_d\geq 0  \ \ {\mbox{and } }\ \ b\preceq a\}.$$
\end{Def}

\begin{lem} \label{cone}
 Let $a=(a_1,\ldots,a_d)$ be a partition.  The cone $C(a)$ is generated by the set of sequences  \ \ 
 $\{v(L,a)\ \vert\ L\in \LLL(d)\},$ where
 $$v(L,a)=\frac{|a(L,1)|}{l_1} {\bf{1}}_{l_1}\vee \ldots \vee \frac{|a(L,r)|}{l_r} {\bf{1}}_{l_r}.$$
 \end{lem}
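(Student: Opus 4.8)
The plan is to reformulate $C(a)$ through the partial sums $A_k=a_1+\cdots+a_k$ and $B_k=b_1+\cdots+b_k$, and to recognize it as a pointed rational polyhedral cone. Indeed, for $b\in\QQ_{\ge 0}^d$ the condition $b\preceq a$ amounts, after clearing the common weight, to the homogeneous inequalities $A_d\,B_k\le A_k\,B_d$ for $1\le k\le d-1$, while ``$b$ non-increasing and $\ge 0$'' reads $B_{k-1}-2B_k+B_{k+1}\le 0$ together with $B_d\ge B_{d-1}$. Since $C(a)\subset\QQ_{\ge 0}^d$ contains no line it is pointed, hence generated by its extreme rays. It therefore suffices to prove two things: every $v(L,a)$ lies in $C(a)$, and every extreme ray of $C(a)$ is a positive multiple of some $v(L,a)$. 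The first statement gives the inclusion $\operatorname{cone}\{v(L,a)\}\subseteq C(a)$ and the second gives the reverse inclusion.

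For the membership $v(L,a)\in C(a)$ I would argue by the elementary ``averaging lowers partial sums'' principle. The sequence $v(L,a)$ is obtained from $a$ by replacing each block $a(L,i)$ by its constant average $|a(L,i)|/l_i$; since $a$ is non-increasing, the last entry of one block dominates the first of the next, so the block averages are non-increasing and $v(L,a)$ is again a non-increasing non-negative sequence. Its partial sums agree with those of $a$ at every block boundary and, within a block, stay below those of $a$ because the leading entries of a non-increasing block exceed the block average. Hence $v(L,a)\preceq a$, i.e. $v(L,a)\in C(a)$.

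The heart of the proof is the description of the extreme rays, and this is the step I expect to be the main obstacle. Let $b$ generate an extreme ray and let $L=(l_1,\ldots,l_r)$ record the maximal constant runs (``blocks'') of $b$, with values $c_1>c_2>\cdots>c_r\ge 0$. By definition the concavity constraint $b_k\ge b_{k+1}$ is tight exactly at the $d-r$ interior positions of the blocks. A one-dimensional face forces the tight constraints to have rank $d-1$, so $r-1$ further independent tight constraints must come from the dominance inequalities (the non-negativity $b_d\ge 0$ being slack when $c_r>0$). A short convexity argument shows that if a dominance inequality is tight at an interior position of a block then $a$ is constant on that block and the inequality is already a consequence of the two boundary dominance constraints together with the concavity relations; consequently the tight dominance constraints contribute rank at most $r-1$, namely the number of block boundaries. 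Rank $d-1$ thus forces all $r-1$ boundary dominance inequalities $A_d B_{k_j}=A_{k_j}B_d$ to be tight, which says precisely that the $j$-th block sum of $b$ is the fixed proportion $B_d/A_d$ of the $j$-th block sum of $a$; dividing by $l_j$ gives $c_j=(B_d/A_d)\,|a(L,j)|/l_j$, that is $b=(B_d/A_d)\,v(L,a)$.

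It remains to treat the degenerate case $c_r=0$, which I would settle by induction on $d$. When the last block of $b$ is zero, the dominance inequality at the preceding boundary forces the corresponding trailing block of $a$ to vanish as well, so $b=b'\vee 0$ and $a=a'\vee(0,\ldots,0)$ with $a',b'$ of length $d'=d-l_r<d$. One checks directly that $b'$ is an extreme ray of $C(a')$, whence by induction $b'$ is a multiple of $v(L',a')$ for $L'=(l_1,\ldots,l_{r-1})\in\LLL(d')$; padding with the zero block recovers $b$ as a multiple of $v(L,a)$ with $L=(l_1,\ldots,l_r)\in\LLL(d)$, since the last block of $v(L,a)$ is then $0$. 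Together with the trivial base case $d=1$ this completes the identification of the extreme rays, and hence the equality $C(a)=\operatorname{cone}\{v(L,a)\mid L\in\LLL(d)\}$.
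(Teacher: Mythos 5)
Your proof is correct, and it takes a genuinely different route from the paper's. The paper slices the cone by the weight hyperplane and classifies the vertices of the polytope $P(a)=C(a)\cap\Sigma(d)$ by induction on $d$: whenever a dominance hyperplane $\Sigma(n)$ with $n<d$ is active at a vertex, the polytope splits as a product $P((a_1,\ldots,a_n))\times P((a_{n+1},\ldots,a_d))$ and induction applies, while the only vertex on no such hyperplane is the constant one $v((d),a)$; the concatenated shape of $v(L,a)$ emerges from the recursion. You instead stay with the homogeneous cone and classify extreme rays directly by a rank count of active constraints, with your discrete-convexity observation --- that $k\mapsto A_dB_k-A_kB_d$ is convex along a constant block of $b$, so an interior tight dominance constraint forces $a$ constant on that block and is linearly dependent on the boundary constraints together with the block equalities --- doing the work that the paper's product decomposition does; your induction is confined to the trailing-zero case, which parallels the paper's step ``$v\in D(d)$ implies $v\in\Sigma(d-1)$''. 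Your route buys two things: you prove the easy inclusion $v(L,a)\in C(a)$ explicitly by the averaging argument, a point the paper leaves unstated even though it is needed for $C(a)$ to \emph{equal} (not merely be contained in) the cone over these vectors; and your analysis pinpoints exactly which of the $2d-1$ constraints can be active on an extreme ray and exhibits the proportionality factor $B_d/A_d$ directly. The paper's route buys brevity: the recursive product splitting handles all tight dominance positions uniformly, including block-interior ones (which in your degenerate situations, e.g. $a$ constant on a block, just produce redundant descriptions of the same $v(L,a)$), at the cost of leaving more routine verifications implicit. One cosmetic remark: your rank argument tacitly uses $B_d>0$, i.e. $b\neq 0$, which is harmless for a ray generator, and both proofs implicitly assume $a\neq 0$.
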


\begin{proof}
Consider the hyperplanes 
$$\Sigma(n)=\{b\in \QQ_{\geq 0}^d\ \vert\ \sum_{k=1}^n b_k = \sum_{k=1}^n a_k\},$$
$$D(n) = \{b\in \QQ_{\geq 0}^d\ \vert\ b_n=b_{n+1}\},$$
$n=1,\ldots, d$, where $b_{d+1}=0$.

$P(a)=C(a)\cap \Sigma(d)$ is a convex polytope in $\Sigma(d)$. Its vertices are given by an intersection of $d$ 
of the $2d$ hyperplanes just introduced, with $\Sigma(d)$ included among them.

 We will  prove the lemma by induction on $d$. For $d=1$ the claim is obvious. 
For a given vertex $v, $ consider the case where  $v\in\Sigma(n)$ for some $n\in \{1,\ldots,d-1\}$. One has
$$\Sigma(n)\cap C(a) \simeq P((a_1,\ldots,a_n))\times P((a_{n+1},\ldots,a_d)),$$
since $(b_1,\ldots,b_n)\in P((a_1,\ldots,a_n))$ implies \  $b_n\geq a_n$ and \\ $(b_{n+1},\ldots,b_d)\in P((a_{n+1},\ldots,a_d))$
implies $a_{n+1}\geq b_{n+1}$. The vertices of $P((a_1,\ldots,a_n))$ and $P((a_{n+1},\ldots,a_d))$ are known by induction.
If $v\in D(d), $ then $v\in \Sigma(d-1)$, too. The only remaining possibility is
$v=\Sigma(d)\cap D(1)\cap D(2)\cap\ldots\cap D(d-1)$, in which case the vertex is $v(L,a)$ for $L=(d)$.
\end{proof}

\begin{Not}
 For $l\in\NN$ \  let $\mu(l)={\it lcm}\ \{1,2,\ldots,l\}$.
\end{Not}

\begin{lem} \label{finite} Let $a$ be a partition.
 There is a finite set $\sigma(a)$ of partitions  such that if
  $b\in C(a)\cap\ZZ_{\geq 0}^d,$ then $b$ can be written as  $$b=c + \displaystyle{\sum_{L\in \LLL(d)} m_L\  v(L,\mu(l)a)}, \ \ c\in\sigma(a)$$ and $m_L\in\ZZ_{\geq 0}$ \ for all $L\in \LLL(d)$.
\end{lem}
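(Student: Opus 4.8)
The plan is to treat this as a standard lattice-point decomposition in a rational polyhedral cone, in the spirit of Gordan's lemma. Write $g_L = v(L,\mu(d)a)$ for $L \in \LLL(d)$, where $d$ is the length of $a$. First I would check that each $g_L$ is an integral point of $C(a)$. Integrality holds because in the expression $v(L,\mu(d)a) = \frac{\mu(d)|a(L,1)|}{l_1}\mathbf{1}_{l_1}\vee \cdots \vee \frac{\mu(d)|a(L,r)|}{l_r}\mathbf{1}_{l_r}$ each block length satisfies $l_i \le d$, hence $l_i \mid \mu(d)$ and every entry $\frac{\mu(d)|a(L,i)|}{l_i}$ is an integer; this is precisely the role played by the least common multiple $\mu(d)$. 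Since $v(L,\mu(d)a) = \mu(d)\,v(L,a)$ and the relation $\preceq$ is invariant under positive scaling, $g_L \in C(a)$, and by Lemma \ref{cone} the $g_L$ still generate the cone $C(a)$.

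Next I would introduce the zonotope $Z = \{\sum_{L\in\LLL(d)} t_L g_L \ \vert\ 0 \le t_L \le 1\}$ and set $\sigma(a) = C(a)\cap\ZZ_{\geq 0}^d\cap Z$. Being a finite Minkowski sum of bounded segments, $Z$ is bounded, so it contains only finitely many lattice points; thus $\sigma(a)$ is finite. Moreover every element of $C(a)\cap\ZZ_{\geq 0}^d$ is automatically a partition (non-increasing and non-negative by the defining inequalities of $C(a)$), so $\sigma(a)$ is a finite set of partitions, as required.

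The decomposition itself is then the floor trick. Given $b\in C(a)\cap\ZZ_{\geq 0}^d$, express it through the generators as $b = \sum_L \lambda_L g_L$ with $\lambda_L \ge 0$, which is possible because the $g_L$ generate $C(a)$. Put $m_L = \lfloor \lambda_L \rfloor \in \ZZ_{\geq 0}$ and $t_L = \lambda_L - m_L \in [0,1)$, and set $c = b - \sum_L m_L g_L = \sum_L t_L g_L$. Then $c \in \ZZ^d$ since $b$, the $g_L$ and the $m_L$ are integral; $c \in C(a)$ since it is a non-negative combination of the $g_L$; and $c \in Z$ by construction. Hence $c\in\sigma(a)$ and $b = c + \sum_L m_L g_L$, which is the claimed form.

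The only real subtlety is that $\LLL(d)$ supplies many more generators than the dimension, so they are far from linearly independent and $Z$ is a genuine zonotope rather than a parallelepiped; consequently neither the representation $b = \sum_L \lambda_L g_L$ nor the resulting $c$ is unique. This does not matter here, since we only need the existence of one decomposition, and flooring each coefficient separately produces a valid $c$ regardless of which representation we start from. The point to keep in mind throughout is to form the generators from $\mu(d)a$ rather than from $a$ itself, precisely so that the $g_L$ are lattice points and the coefficients $m_L$ can be taken integral.
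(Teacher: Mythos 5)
Your proof is correct and follows essentially the same route as the paper: the paper also takes the generators $v(L,\mu(d)a)$ (which are integral partitions because $l_i\mid\mu(d)$), writes $b$ as a non-negative rational combination via Lemma \ref{cone}, and defines $\sigma(a)$ as the (finite, bounded) set of integral points with all coefficients in $[0,1)$ --- i.e.\ exactly your zonotope-plus-floor-trick argument, which the paper leaves mostly implicit. You also correctly identified that $\mu(l)$ in the statement is a typo for $\mu(d)$, matching the paper's own proof.
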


\begin{proof}
Note that each $v(L,\mu(d)a)$ is a partition. According to Lemma \ref{cone}
 one has $$b= \displaystyle{\sum_{L\in \LLL(d)} \tau _L \ v(L,\mu(d)a)}$$ with
  $ \tau_L \in\QQ_{\geq 0}$ for all $L\in \LLL(d)$. The set of partitions $b$ 
 with $0\leq \tau_L<1$ for all $L\in \LLL(d)$ is a bounded subset of $\ZZ_{\geq 0}^d,$ thus finite.
\end{proof}

\begin{lem} \label{vinc}
 Let $a$ be a partition. For any $L\in \LLL(d)$
 $${v(L,\mu(d)a)}\in a^{\otimes \mu(d)}.$$
\end{lem}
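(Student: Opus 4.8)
The plan is to reduce the statement, by means of the formal Propositions \ref{ZZ} and \ref{V} already at hand, to a single base case concerning exterior powers, and then to settle that base case by the conjugation symmetry of Littlewood--Richardson coefficients together with the dominance criterion for Kostka numbers.

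First I would invoke Proposition \ref{V}. Writing $L=(l_1,\ldots,l_r)$, the $i$-th block of the sequence $v(L,\mu(d)a)$ is the rectangle $\tfrac{\mu(d)|a(L,i)|}{l_i}\,{\bf 1}_{l_i}$; this is a genuine partition because $l_i\le d$ forces $l_i\mid\mu(d)$. By Proposition \ref{V}, extended to $\mu(d)$ tensor factors through Remark \ref{Rem}, it is enough to prove for each block separately that its rectangle lies in $a(L,i)^{\otimes\mu(d)}$. I therefore fix one block and abbreviate $\alpha=a(L,i)$, $l=l_i$ and $w=|\alpha|$, so that the goal becomes the single assertion $\tfrac{\mu(d)w}{l}\,{\bf 1}_{l}\in\alpha^{\otimes\mu(d)}$, a statement about representations of $GL_{l}$.

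The heart of the matter is the identity $w\,{\bf 1}_{l}\in\alpha^{\otimes l}$, that is, that $(\det V)^{w}$ occurs in $(\SSS_\alpha V)^{\otimes l}$ when $\dim V=l$. I would derive it from the base case
$$ m\,{\bf 1}_{l}\in(\wedge^{m}V)^{\otimes l},\qquad 1\le m\le l, $$
by decomposing $\alpha$ into columns: if $\alpha'$ denotes the conjugate partition, then $\alpha=\sum_{k}\big({\bf 1}_{\alpha'_k}\vee{\bf 0}_{l-\alpha'_k}\big)$, each summand being the column $\wedge^{\alpha'_k}$. Applying Proposition \ref{ZZ} in the form ``constituents add under the componentwise sum of tensor products'' --- extended to $l$ slots by Remark \ref{Rem} and iterated over the columns $k$ --- the base-case constituents $\alpha'_k\,{\bf 1}_{l}$ add up to $\big(\sum_k\alpha'_k\big){\bf 1}_{l}=w\,{\bf 1}_{l}$ inside $\alpha^{\otimes l}$. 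Passing from $l$ to $\mu(d)$ factors is then immediate: grouping the $\mu(d)$ factors into $\mu(d)/l$ blocks of $l$, each block contributes the one-dimensional constituent $w\,{\bf 1}_{l}=(\det V)^{w}$, and by Remark \ref{Rem} these multiply to the single partition $\tfrac{\mu(d)w}{l}\,{\bf 1}_{l}$, as required.

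The one step that is not purely formal, and hence the main obstacle, is the base case $m\,{\bf 1}_{l}\in(\wedge^{m}V)^{\otimes l}$ --- equivalently the positivity of the multiplicity of $\SSS_{(m^{l})}V=(\det V)^{m}$ in $(\SSS_{(1^{m})}V)^{\otimes l}$. I would express this multiplicity as the Littlewood--Richardson number $c^{(m^{l})}_{(1^{m}),\ldots,(1^{m})}$ and use the conjugation symmetry $c^{\lambda}_{\mu^{1},\ldots,\mu^{r}}=c^{\lambda'}_{(\mu^{1})',\ldots,(\mu^{r})'}$ to rewrite it as $c^{(l^{m})}_{(m),\ldots,(m)}$, with $l$ factors $(m)$. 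By the expansion of complete homogeneous symmetric functions into Schur functions this last coefficient is the Kostka number $K_{(l^{m}),(m^{l})}$, which is strictly positive precisely because $(l^{m})\succeq(m^{l})$ in the dominance order --- an inequality that holds since $m\le l$. This closes the base case, and with it the entire lemma.
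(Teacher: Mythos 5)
Your proof is correct, and its skeleton coincides with the paper's own three-step reduction: you reduce to a single block via Proposition \ref{V} (extended by Remark \ref{Rem}), you decompose $\alpha$ into its columns and add constituents via Proposition \ref{ZZ} --- this is precisely the paper's Lemma \ref{det} --- and the grouping of the $\mu(d)$ tensor factors into $\mu(d)/l$ packets of $l$ is the same step the paper leaves implicit in its one-line deduction of Lemma \ref{vinc}. (Both you and the paper also leave implicit the trivial check that the block averages of $v(L,\mu(d)a)$ are non-increasing, so that it is a partition and Proposition \ref{V} applies; the paper only asserts this in the proof of Lemma \ref{finite}.) Where you genuinely diverge is the base case $m\,{\bf 1}_{l}\in(\wedge^{m})^{\otimes l}$. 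The paper settles it with its Lemma \ref{Lambda}, an elementary induction on the number of tensor factors using only the two Pieri-type containments $\wedge^{s+k}\in\wedge^{s}\otimes\wedge^{k}$ (for $s+k<d$) and $\det\otimes\,\wedge^{s+k-d}\in\wedge^{s}\otimes\wedge^{k}$ (for $s+k\geq d$); note that the more general statement $((\det)^{q}\otimes\wedge^{s})\in(\wedge^{k})^{\otimes m}$ with $s\neq 0$ is proved only so that the induction closes, since Lemma \ref{det} uses just the case $s=0$. You instead compute the relevant multiplicity in closed form: conjugation symmetry of Littlewood--Richardson coefficients converts $c^{(m^{l})}_{(1^{m}),\ldots,(1^{m})}$ into $c^{(l^{m})}_{(m),\ldots,(m)}=K_{(l^{m}),(m^{l})}$, which is positive because $(l^{m})\succeq(m^{l})$ when $m\leq l$ --- your dominance check is correct. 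Your route is slicker and identifies the exact Kostka number witnessing positivity, but it imports two standard facts (conjugation symmetry and the dominance criterion for Kostka numbers) that go beyond the minimal list of Littlewood--Richardson properties the paper announces it will use (Propositions \ref{dom1}, \ref{ZZ}, \ref{V}); the paper's induction is coarser but entirely self-contained within that toolkit.
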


The proof of this lemma will be subdivided into three elementary steps. 
\begin{lem}  \label{Lambda}
 Let  $k,m,q$ be non-negative integers with $k \leq d$ and let $mk=dq+s$
 with $0\leq s < k.$  Then
  $$((\det)^q\otimes  \wedge ^s ) \in ( \wedge ^k)^{\otimes m } . $$ 
 
 \end{lem}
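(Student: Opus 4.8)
The plan is to translate the statement into a question about the decomposition of $(\wedge^k)^{\otimes m}$ into irreducibles and to settle it by exhibiting the required constituent through an explicit semistandard filling. First I would identify the partition on the left-hand side: since $\wedge^k$ is the partition with $k$ ones and $d-k$ zeros and $(\det)^q$ is the partition $(q,\ldots,q)$ of length $d$, tensoring $\SSS_{\wedge^s}V$ with the one-dimensional representation $(\det V)^{\otimes q}$ merely shifts every part by $q$. Hence $(\det)^q\otimes\wedge^s$ is the \emph{single} partition
\[
\lambda=(\underbrace{q+1,\ldots,q+1}_{s},\underbrace{q,\ldots,q}_{d-s}),
\]
and what must be shown is that $\SSS_\lambda V$ occurs in $(\wedge^k V)^{\otimes m}$. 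As a compatibility check, $|\lambda|=qd+s=mk$ is the weight of every constituent of $(\wedge^k)^{\otimes m}$, so the weights match.

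Next I would invoke the Pieri rule for exterior powers, $\SSS_\mu\otimes\wedge^k=\bigoplus_\nu\SSS_\nu$, summed over all $\nu$ obtained from $\mu$ by adjoining a vertical strip of $k$ boxes (at most one box per row). Iterating $m$ times shows that $\lambda\in(\wedge^k)^{\otimes m}$ as soon as $\lambda$ can be built from the empty partition by $m$ successive vertical $k$-strips. Passing to conjugate diagrams, this is equivalent to the existence of a semistandard Young tableau of shape $\lambda'$ and content $(k,k,\ldots,k)$ with $m$ entries, that is, to the positivity of the Kostka number $K_{\lambda',(k^m)}$; here $\lambda'=(\underbrace{d,\ldots,d}_{q},s)$ consists of $q$ rows of length $d$ followed by one row of length $s$.

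The concrete and decisive step is then to produce one such tableau. I would fill the $qd+s=mk$ boxes of $\lambda'$ in reading order (left to right, top to bottom), assigning to the $n$-th box the entry $\lceil n/k\rceil\in\{1,\ldots,m\}$. Rows are weakly increasing by construction, and each value $v$ occupies exactly the $k$ consecutive positions $(v-1)k+1,\ldots,vk$, so the content is precisely $(k^m)$. The single place where the hypothesis $k\le d$ enters is column-strictness: two vertically adjacent boxes lie in one column and their reading positions differ by exactly $d$ (a full row has length $d$, and the same gap holds between the last two rows for the first $s$ columns), so since $d\ge k$ their entries differ by at least one and strictly increase down the column. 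This exhibits a valid semistandard tableau, giving $K_{\lambda',(k^m)}>0$ and hence $\lambda\in(\wedge^k)^{\otimes m}$.

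I expect the main obstacle to be bookkeeping rather than depth: one must set up the Pieri/conjugation dictionary correctly and verify that the elementary inequality $\lceil y/k\rceil\ge\lceil x/k\rceil+1$ whenever $y-x\ge k$ really delivers column-strictness, including at the junction with the short last row. The degenerate cases $s=0$ (where $\lambda'=(d^q)$ and $\lambda=(q^d)=\det^q$) and $k=d$ (which forces $m=q$ and $s=0$) are handled by the same filling.
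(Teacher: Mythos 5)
Your proof is correct, but it takes a genuinely different route from the paper's. The paper argues by induction on $m$: assuming $(\det)^q\otimes\wedge^s\in(\wedge^k)^{\otimes m}$, it tensors with one more $\wedge^k$ and uses only two one-step facts, namely $\wedge^{s+k}\in\wedge^s\otimes\wedge^k$ when $s+k<d$ and $\det\otimes\wedge^{s+k-d}\in\wedge^s\otimes\wedge^k$ when $s+k\geq d$ --- single constituents read off from the Pieri/Littlewood--Richardson rule, whose case split mirrors exactly the Euclidean-division bookkeeping in $mk=dq+s$. You instead identify the target as the single partition $\lambda=((q+1)^s,q^{d-s})$, translate membership in $(\wedge^k)^{\otimes m}$ via iterated Pieri and conjugation into positivity of the Kostka number $K_{\lambda',(k^m)}$ with $\lambda'=(d^q,s)$, and then settle it by an explicit semistandard filling (entry $\lceil n/k\rceil$ in reading order), with the hypothesis $k\leq d$ entering in the column-strictness check exactly where the paper's case split occurs; your verification of rows, columns, content, and the degenerate cases $s=0$ and $k=d$ is sound. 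Both arguments are complete. The paper's induction is shorter and deliberately stays within the minimal toolkit of its Propositions 3.4--3.6 (it uses nothing beyond those elementary consequences of the LR rule), whereas your approach buys an explicit combinatorial witness --- a constructive proof that $K_{\lambda',(k^m)}>0$, hence of the exact multiplicity statement underlying the lemma --- at the cost of invoking the full Pieri rule and the conjugation dictionary, machinery the paper intentionally avoids.
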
 
 
 \begin{proof}
 By induction on $m,$ with $m=1$ as trivial case. One has
$$ ((\det)^q\otimes  (\wedge ^s \otimes \wedge ^k)) \in ( \wedge ^k)^{\otimes (m+1) } $$ and 
$$\wedge ^{s+k}\in (\wedge ^s \otimes \wedge ^k)$$ for $s+k<d$,
$$ (\det \otimes \wedge^{s+k-d})\in (\wedge ^s \otimes \wedge ^k) $$ 
for $s+k\geq d$.
 \end{proof}
 
\begin{lem} \label{det} 
For any partition $a$ of weight ${|a|}$ and length $d$,
$$(\det)^ {|a|}\in {a}^{\otimes d}.$$
 \end{lem}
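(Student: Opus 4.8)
The plan is to reduce the statement to the exterior-power case already settled in Lemma \ref{Lambda} and then to reassemble the Schur functor column by column from its conjugate partition. Write $a'=(a'_1,\ldots,a'_{a_1})$ for the conjugate (transpose) partition of $a$, so that $a'_j=\#\{i : a_i\ge j\}$ is the length of the $j$-th column of the Young diagram of $a$, with $\sum_{j=1}^{a_1}a'_j=|a|$ and each $a'_j\le d$, so that every $\wedge^{a'_j}$ is a genuine exterior power on a rank-$d$ space.

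First I would apply Lemma \ref{Lambda} in the special case $m=d$ and $k=a'_j$. Since $mk=d\,a'_j=d\cdot a'_j+0$, the remainder is $s=0$ and the quotient is $q=a'_j$, so the lemma yields
$$(\det)^{a'_j}\in(\wedge^{a'_j})^{\otimes d}$$
for every column index $j$. This is the crux of the reduction: choosing $m=d$ makes the leftover exterior factor $\wedge^s$ disappear and leaves a clean power of the determinant in each of the $d$ slots.

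Next I would tensor these $a_1$ relations together over all columns using Proposition \ref{ZZ} in its many-factor form (Remark \ref{Rem}). For a fixed $j$ all $d$ tensor slots carry the same partition $\mathbf{1}_{a'_j}\vee\mathbf{0}_{d-a'_j}$; summing the outputs and the slot-wise inputs across $j$ gives
$$\sum_{j=1}^{a_1}(\det)^{a'_j}\ \in\ \Big(\sum_{j=1}^{a_1}\mathbf{1}_{a'_j}\vee\mathbf{0}_{d-a'_j}\Big)^{\otimes d}.$$
The output exponent sums to $\sum_j a'_j=|a|$, so the left-hand side is $(\det)^{|a|}$. The key bookkeeping step is to identify the common input partition: the $i$-th coordinate of $\sum_j \mathbf{1}_{a'_j}\vee\mathbf{0}_{d-a'_j}$ equals $\#\{j:a'_j\ge i\}$, which is precisely $a_i$ because the conjugate of the conjugate is the original partition. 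Hence the input partition is $a$ itself and we obtain $(\det)^{|a|}\in a^{\otimes d}$, as claimed.

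The only place that needs care — and what I expect to be the main, though mild, obstacle — is this double-conjugate identity together with the verification that the many-factor version of Proposition \ref{ZZ} is applied with the $d$ slots correctly aligned; once the indices are matched everything is formal. As a consistency check one should note that $a^{\otimes d}$ has weight $d|a|$, matching the weight of $(\det)^{|a|}=(|a|,\ldots,|a|)$, and that this partition is in fact the dominance-minimal partition of that weight, in harmony with Proposition \ref{dom1}.
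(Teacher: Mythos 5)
Your proof is correct and takes essentially the same route as the paper: the paper likewise decomposes $a$ column-by-column via the transpose $\tilde a$, obtains $(\det)^k\in(\wedge^k)^{\otimes d}$ from Lemma \ref{Lambda} (exactly the $m=d$, $s=0$, $q=k$ case you isolate), and sums over the columns using Proposition \ref{ZZ} in its many-factor form. The only difference is presentational: you make explicit the double-conjugate identity and the slot-wise alignment that the paper's four-line proof leaves implicit.
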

 
\begin{proof}
Let $\tilde a=( t_1,  t_2, \ldots )$ be the transpose of $a$. 
Then $a= \wedge^{t _1}+\wedge^{t_2}+ \ldots . $
By Lemma(\ref{Lambda})  the result is true for $\wedge^k$, $k=1,2,\ldots.$ 
By Proposition(\ref{ZZ})  it is true for $a$.
\end{proof}

The proof of  
Lemma(\ref{vinc}) follows immediately from Proposition(\ref{V}) and 
Lemma(\ref{det}).

\section{ Proof of Theorem \ref{dom}}

For the proof we need to recall a particular case of  Lemma(3.3) in \cite{semiample}.

\begin{lem}\label{semiample}
A vector bundle $E$ on $X$ is ample if and only if the following condition is true:
 Given any coherent sheaf $\mathcal{F}$ on $X$, there exists $N(\mathcal{F})\in\NN$  such that for any  
 $n\geq N(\mathcal{F})$ one has
 $$ H^q(X,E^{\otimes n} \otimes \mathcal{F})=0 \  \  {\mbox{for }} \  q>0. $$
\end{lem}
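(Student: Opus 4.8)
The plan is to reduce both directions to the cohomological criterion for \emph{line} bundles on the projective bundle $\pi\colon\PP(E)\to X$. Recall Serre's theorem that a line bundle $\xi$ on a projective variety $Y$ is ample if and only if for every coherent sheaf $\GGG$ on $Y$ there is an integer $m_0$ with $H^q(Y,\GGG\otimes\xi^m)=0$ for all $q>0$ and $m\ge m_0$. Applying this to $\xi=\OOO_{\PP(E)}(1)$, together with Hartshorne's definition ($E$ ample $\iff\OOO_{\PP(E)}(1)$ ample), the projection formula and the Leray spectral sequence for $\pi$ (using $\pi_*\OOO_{\PP(E)}(m)=\SSS^mE$ and $R^{q}\pi_*\OOO_{\PP(E)}(m)=0$ for $q>0$), yields the \emph{symmetric} form of the criterion, which I take as the known core: $E$ is ample if and only if for every coherent $\mathcal F$ there is $N(\mathcal F)$ with $H^q(X,\SSS^mE\otimes\mathcal F)=0$ for $q>0$ and $m\ge N(\mathcal F)$. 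Call this condition $(\ast)$.

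For the \textbf{if} direction I would argue that the hypothesis, stated with $E^{\otimes n}$, implies $(\ast)$ and hence ampleness. The point is that over $\CC$ the symmetric power $\SSS^mE$ is a $GL$-equivariant \emph{direct summand} of $E^{\otimes m}$, the splitting of the inclusion being the symmetrizer $\frac{1}{m!}\sum_{\sigma\in S_m}\sigma$. Tensoring with a fixed coherent $\mathcal F$ and taking cohomology, $H^q(X,\SSS^mE\otimes\mathcal F)$ is therefore a direct summand of $H^q(X,E^{\otimes m}\otimes\mathcal F)$; if the latter vanishes for all $q>0$ and $m\ge N(\mathcal F)$, so does the former, so $(\ast)$ holds and $E$ is ample.

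The substantive direction is \textbf{only if}: assuming $E$ ample, produce $N(\mathcal F)$ with $H^q(X,E^{\otimes n}\otimes\mathcal F)=0$ for all $q>0$ and $n\ge N(\mathcal F)$. I would compute this cohomology on the $n$-fold fibre product $\rho_n\colon Y_n=\PP(E)\times_X\cdots\times_X\PP(E)\to X$. Writing $L_n=p_1^*\OOO_{\PP(E)}(1)\otimes\cdots\otimes p_n^*\OOO_{\PP(E)}(1)$ for the relative Segre line bundle, iterated use of the projection formula and Leray (each factor having $\pi_*\OOO(1)=E$ and vanishing higher direct images) identifies $H^q(X,E^{\otimes n}\otimes\mathcal F)$ with $H^q(Y_n,L_n\otimes\rho_n^*\mathcal F)$. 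The relative Segre embedding $Y_n\hookrightarrow\PP(E^{\otimes n})$ is a closed immersion pulling $\OOO(1)$ back to $L_n$, and since tensor powers of an ample bundle are ample, $\OOO_{\PP(E^{\otimes n})}(1)$ and hence $L_n$ are ample; thus each individual group vanishes once the twist is large.

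The hard part is \emph{uniformity in $n$}: the ordinary Serre threshold on $Y_n$ depends on the sheaf $\rho_n^*\mathcal F$ and on the space $Y_n$, both varying with $n$, so one cannot invoke Serre's theorem term by term. I would resolve this by passing to the decomposition $E^{\otimes n}=\bigoplus_{\lambda}(\SSS_\lambda E)^{f_\lambda}$ over partitions $\lambda$ of $n$ of length at most $d$, and observing that the normalized directions $\lambda/n$ range over the fixed compact simplex $\{x_1\ge\cdots\ge x_d\ge 0,\ \sum x_i=1\}$; ampleness of $E$ makes each $\SSS_\lambda E$ ample, with positivity scaling in $n$, so a compactness argument on the simplex upgrades the finitely many local Serre thresholds to a single bound $N(\mathcal F)$ valid for all large $n$. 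This uniform vanishing is exactly the content recalled from Lemma~3.3 of the cited reference, and it is the only step that is not a formal consequence of the line-bundle criterion; everything else is Leray plus the summand relation between $\SSS^mE$ and $E^{\otimes m}$.
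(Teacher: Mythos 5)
First, note a point of comparison: the paper offers no internal proof of this lemma at all --- it is explicitly \emph{recalled} as a particular case of Lemma~3.3 of the cited reference \cite{semiample} --- so your attempt must be judged as a standalone argument. Your ``if'' direction is correct and complete: in characteristic zero $\SSS^mE$ is a direct summand of $E^{\otimes m}$ via the symmetrizer, so the tensor-power vanishing hypothesis implies Hartshorne's symmetric-power cohomological criterion (which is indeed a known theorem, provable by Leray on the single fixed space $\PP(E)$), hence ampleness. Your diagnosis of the ``only if'' direction is also accurate: computing $H^q(X,E^{\otimes n}\otimes\mathcal F)$ on the fibre products $Y_n$ fails to give uniformity because the space varies with $n$, and uniformity in $n$ is the entire nontrivial content of the lemma.

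The genuine gap is the final step. The proposed ``compactness argument on the simplex'' is not a proof and would fail as stated: ampleness of each $\SSS_\lambda E$ supplies, for each individual $\lambda$, a Serre-type threshold, but there are infinitely many partitions $\lambda$ with $|\lambda|=n$ as $n\to\infty$, and vanishing thresholds are neither continuous nor even well-defined as functions of the normalized direction $\lambda/n$, so compactness of the simplex controls nothing; likewise ``positivity scaling in $n$'' is not a statement to which any vanishing theorem applies. The mechanism that actually delivers uniformity is of a different kind: by Bott's formula every group $H^q(X,\SSS_\lambda E\otimes\mathcal F)$ is realized on \emph{one fixed space}, the flag bundle $\mathcal{F}l(E)$, as $H^q(\mathcal{F}l(E),Q_\lambda\otimes\pi^*\mathcal F)$, where the line bundles $Q_\lambda$ range over a finitely generated semigroup of the Picard group; writing $Q_\lambda$, for $|\lambda|$ large, as a large power of a fixed positive line bundle twisted by a nef one, a vanishing theorem with built-in uniformity in the nef twist (Fujita's vanishing theorem) yields a single $N(\mathcal F)$. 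Trading ``infinitely many bundles on $X$'' for ``finitely many generators on one fixed space plus nef twists'' is exactly what your simplex argument cannot supply. Indeed you concede the point yourself --- you write that the uniform vanishing ``is exactly the content recalled from Lemma~3.3 of the cited reference'' --- so your argument, as written, reduces the lemma to itself and is incomplete precisely at its only hard step.
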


 Assuming $\SSS_a E$ ample and  \  $b \preceq a$ \ we want to 
 prove that $\SSS_bE$ is ample.

According to Lemma \ref{semiample} we have a map $N_a$ from the coherent sheaves on $X$ to $\NN$ such that
$$ H^q(X,(\SSS_aE)^{\otimes n} \otimes \mathcal{F})=0 \  \  {\mbox{for }} \  q>0\  \  {\mbox{and}} \  n>N_a(\mathcal{F}).$$
We want to prove the analogous property for $\SSS_bE$.\\
For any partition $c$ contained in $b^{\otimes m}$, according to Lemma \ref{dom1} and \ref{finite}
  one has a decomposition
$c = f+g$, where $f\in \sigma(a)$ and
$g= \displaystyle{\sum_{L\in \LLL(d)} \tau_L\  v(L, \mu(d)\ a)}, $ 
with $ \tau_L$  a non-negative integer  for all $ L\in \LLL(d).$ \\ 
 According to Proposition \ref{ZZ} and Lemma \ref{vinc}
 we have $g \in a^{\otimes \mu(d)s}$, where $s =\displaystyle{\sum_{L\in \LLL(d)}  \tau_L}$. 
 Define a map $N_b$ from the coherent sheaves on $X$ to $\NN$ by
 $$N_b(\mathcal{F})=[{\it max}_{f\in\sigma(a)}\frac {|f|+N_a(\SSS_fE\otimes \mathcal{F})|a|}{|b|}],$$
where the symbol $[ \ ]$ is the integral part.
 
 By Proposition(\ref{ZZ}) 
 $ H^q(X,(\SSS_bE)^{\otimes m} \otimes \mathcal{F})$ is a direct sum of subgroups of the cohomology groups
 $H^q(X,(\SSS_aE)^{\otimes  \mu(d)s}\otimes (\SSS_fE\otimes \mathcal{F}))$ with $f\in\sigma(a)$, $m|b|=|f|+\mu(d)|a|s$.
The latter groups vanish for  $m\geq N_b(\mathcal{F})$. Thus $\SSS_bE$ is ample. This finishes the proof of  Theorem \ref{dom}.

{\it Acknowledgement: The first author would like to thank  
Dublin  Institute for Advanced Studies for its hospitality. We would also 
like to thank Nagaraj D.S. for useful discussion.}

  \end{document}